\documentclass[a4paper]{article}
\usepackage{graphicx}
\usepackage{url}
\usepackage[margin=3cm]{geometry}
\usepackage{amsmath,amssymb}
\usepackage{amsthm}
\usepackage[hidelinks]{hyperref}
\usepackage{xcolor}
\usepackage{tikz}
\newtheorem{theorem}{Theorem}
\newtheorem{lemma}[theorem]{Lemma}

\newtheorem{claim}[theorem]{Claim}
\usepackage{thm-restate}
\usepackage{paralist}

\newcommand{\claimproofstart}[1][Proof]{\begin{proof}[#1]
\renewcommand{\qedsymbol}{$\boxdot$}}
\newcommand\claimproofend{
\end{proof}
\renewcommand{\qedsymbol}{$\square$}}
\newcommand{\su}{\subseteq}

\title{Poset dimension and maximum comparability degree}
\author{
Carla Groenland\thanks{Delft Institute of Applied Mathematics, TU Delft, the Netherlands. Email: 
{\tt c.e.groenland@tudelft.nl}. Supported by the Dutch Research Council (NWO, VI.Veni.232.073).}\and Richard Montgomery\thanks{Mathematics Institute, University of Warwick, Coventry, United Kingdom. Email: {\tt
richard.montgomery@warwick.ac.uk}. Supported by the European Research Council (ERC) under the European Union Horizon 2020 research and innovation programme (grant agreement No.\ 947978).}
\and
Rajko Nenadov\thanks{School of Mathematics and Statistics, University of Canterbury, New Zealand. Email: {\tt rajko.nenadov@canterbury.ac.nz}. Supported by the New Zealand Marsden Fund 23-UOA-117.}
\and Lisa Sauermann\thanks{Institute for Applied Mathematics, University of Bonn, Germany. Email: {\tt lsauerma@uni-bonn.de}. Supported by the
DFG Heisenberg Program (project no.\ 515036630).}}

\date{\today}

\begin{document}
\maketitle

\begin{abstract}
    In 1986, F{\"u}redi and Kahn showed that the dimension $\dim(P)$ of any finite poset $P$ satisfies $\dim(P) = O(d \log^2 d)$, where $d$ is the maximum degree of the comparability graph of $P$. Scott and Wood more recently improved this bound to one of the form $d \log^{1+o(1)} d$. We show that  $\dim(P) = O(d \log d)$, thus confirming that the corresponding lower bound of Erd\H{o}s, Kierstead, and Trotter is tight up to the implicit constant.
\end{abstract}

\section{Introduction}

The dimension of a poset $P$ is the minimum possible size of a  collection $\{L_1,\dots,L_d\}$ of linear extensions of $P$ such that, for every pair of distinct elements $x$ and $y$ with $x\not\prec_P y$, we have $y\prec_{L_i} x$ for one of these linear extensions $L_i$ (see also Section \ref{sec:preliminaries} for further background).

Here, we study an old problem about bounding the dimension $\dim(P)$ of a poset $P$ in terms of the maximum degree $d$ of the comparability graph of $P$. The first result in this direction was by R\"odl and Trotter, who showed that any poset $P$ with maximum comparability degree $d$ satisfies $\dim(P) \le 2d^2+2$ (see~\cite{furedi1986dimensions} and~\cite{trotter1996graphs}), before F{\"u}redi and Kahn~\cite{furedi1986dimensions} improved this in 1986 to $\dim(P) = O(d \log^2 d)$. In the 1990s, 
Erd\H{o}s, Kierstead, and Trotter~\cite{erdos1991dimension} showed that there exist posets $P$ with maximum comparability degree $d$ such that $\dim(P) = \Omega(d \log d)$. Their example is a poset of height 2 with an otherwise randomly chosen comparability graph (see~\cite{erdos1991dimension} for details).

More recently, Scott and Wood~\cite{scott2020better} improved the long-standing F\"uredi-Kahn bound by proving that any poset with maximum comparability degree $d$ satisfies
$$
\dim(P) \le d\log d\cdot e^{O(\sqrt{\log\log d})}.
$$ 
Here, we improve this, showing that the lower bound of Erd\H{o}s, Kierstead, and Trotter~\cite{erdos1991dimension} is tight up to a constant factor, as follows.

\begin{theorem}\label{thm:main}
Let $d\ge 5$ be an integer. Every finite poset $P$ whose comparability graph has maximum degree at most $d$ satisfies $\dim(P)\le 50 d\log d$.
\end{theorem}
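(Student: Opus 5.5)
We plan to use the standard reduction to realizers built from random linear extensions with a local correction step. Recall that $\dim(P)\le t$ iff there are linear extensions $L_1,\dots,L_t$ such that every incomparable pair $(x,y)$ is reversed, i.e.\ $y\prec_{L_i}x$ for some $i$. It suffices to reverse every ``critical'' incomparable pair. Following F\"uredi--Kahn, we reduce this to a covering problem: for an incomparable pair $(x,y)$, it is enough to find a linear extension in which all of $D[x]$ (the down-set of $x$ intersected with the neighbourhood of $x$) lies below all of $U[y]$. More precisely, we show that the pair is reversed if $x$ is placed above its at most $d$ comparable elements and $y$ below its at most $d$ comparable elements in a compatible way. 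Our approach is to generate linear extensions from a random ordering: take a uniformly random map $f:P\to[0,1]$, or a random ordering $\sigma$. From it we form the linear extension obtained by pushing each element as far as its comparabilities allow, e.g.\ ordering by $\max_{z\preceq x} f(z)$ with ties broken by $P$. Whether $(x,y)$ gets reversed then depends only on $f$ restricted to a neighbourhood of bounded radius around $\{x,y\}$.

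The key steps, in order, are as follows.

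\textbf{Step (1).} Reduce to posets of height $2$ (or to bounded-depth neighbourhoods), in the style of Scott--Wood. We do this by splitting $P$ according to a proper colouring of the square of the comparability graph restricted to appropriate levels, or by the standard observation that a bounded-degree poset's dimension is controlled by a bipartite ``split'' poset at a constant-factor loss.

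\textbf{Step (2).} Show that for a random extension, a fixed incomparable pair $(x,y)$ is reversed with probability at least $p\approx c/d$. For instance, this happens if $y$ receives a value lower than all $\le d$ elements in its up-neighbourhood, and the analogous condition holds for $x$.

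\textbf{Step (3).} Avoid the naive union bound over all $n^2$ pairs, which would give $O(d\log n)$. Instead, we use the Lov\'asz Local Lemma or a nibble/alteration argument. With $t=Cd\log d$ independent extensions, a pair fails with probability $(1-c/d)^t\le d^{-cC}$. The bad event for $(x,y)$ depends only on pairs within distance $O(1)$, of which there are $\mathrm{poly}(d)$. Hence the Local Lemma applies once $C$ is a large enough constant. This is where the $\log d$ (instead of $\log^2 d$) comes from: the dependence degree must be polynomial in $d$ and the success probability must be $\Omega(1/d)$, not $\Omega(1/(d\log d))$.

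The main obstacle is Step (2) together with the locality in Step (3). The natural construction that makes the events local, namely ``$x$ is above its down-neighbours and $y$ below its up-neighbours'', does not by itself guarantee that some linear extension puts $y$ below $x$. Transitivity through long chains in $P$ may force $x\prec y$ indirectly, and such chains are not local. I would handle this first by restricting to height-$2$ reductions, so that comparabilities are direct. Failing that, I would define the extension by levels using the random values only on minimal/maximal ends, so that the reversal condition for $(x,y)$ is implied by a condition on $N[x]\cup N[y]$ alone. The explicit constant $50$ would then come from optimizing $C$ in the Local Lemma computation together with the constant-factor loss in Step (1).
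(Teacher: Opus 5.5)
There is a genuine gap in Step (3), and it is exactly the difficulty the problem turns on. The height-$2$ reduction is fine (e.g.\ via Kimble's lemma), and so is Step (2). In the natural construction, each $w\in B$ is inserted just before its first neighbour in a uniformly random order of $T$. A pair $w\in B$, $u\in T\setminus N(w)$ is then reversed with probability $1/(|N(w)|+1)$.

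Your dependency count, however, is wrong. The bad event for $(w,u)$ is determined by the random values on $N(w)\cup\{u\}$. It therefore shares variables with the bad events of \emph{all} pairs $(w,u')$ with $u'\in T\setminus N(w)$, and of all pairs $(w',u)$. The two elements of an incomparable pair can be arbitrarily far apart in the comparability graph, so there can be $\Theta(n)$ such pairs, not $\mathrm{poly}(d)$, and the Local Lemma does not apply.

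This is not merely a weakness of the analysis. Condition on the values on $N(w)$ in each of the $t$ orders. The events that the various $u$ are reversed with $w$ are then independent, each failing with probability $q=\prod_{i=1}^t(1-m_i)$, where $m_i$ is the minimum value on $N(w)$ in the $i$-th order. This $q$ is typically polynomially small in $d$, not in $n$. So once $n$ is large compared with a suitable power of $d$, some $u$ fails for $w$ with high probability. This is why independent random extensions only give $O(d\log n)$. The obstacle you flag, transitivity through long chains, is absent in height $2$ and is not the real issue.

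The missing idea is to make the orders on $T$ depend on only a bounded amount of information, so that the quantifier over all $u$ ranges over a set of size $\mathrm{poly}(d)$. The paper does this in four steps.
\begin{itemize}
\item It colours $T$ with $\Delta^5$ colours, where $\Delta=d+1$, and takes at most $10\Delta\log\Delta$ uniformly random orders of the colour set.
\item Since there are only $\Delta^5$ colours, a union bound over \emph{every} colour $x$ costs only a $\log\Delta$ factor. This shows that for all but a $\Delta^{-3}$-fraction of $\Delta$-sets $F$ of colours, each colour $x$ precedes $F\setminus\{x\}$ in some order.
\item The Local Lemma is then applied to the random colouring rather than to pairs. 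There is one event per $w\in B$: that $\phi(N(w))$ fails to be rainbow or fails to lie in a good $F$. This event depends only on the colours of $N(w)$, has probability at most $2/\Delta^3$, and has dependency degree at most $\Delta^2$.
\item Each colour order yields two orders on $T$, listing each colour class in both directions, which handles a $u$ sharing its colour with a neighbour of $w$. Inserting each $w\in B$ just before its first neighbour then gives $2|\mathcal{L}|+2\le 21\Delta\log\Delta\le 50d\log d$ linear extensions.
\end{itemize}
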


Scott and Wood~\cite{scott2020better} proved their bound towards Theorem~\ref{thm:main} via the notion of boxicity. The connection between boxicity and the problem studied here was shown by Adiga, Bhowmick, and Chandran~\cite{adiga2011boxicity}, and through this Theorem~\ref{thm:main} shows that the maximum boxicity of graphs with maximum degree $d$ is $O(d\log d)$ (see~\cite{scott2020better} for more details).

After recalling some basic definitions concerning posets, in Section~\ref{sec:preliminaries} we give a reduction of Theorem~\ref{thm:main} to a special case, presented as Theorem~\ref{thm:bipartite_dimension}, using an old result of Kimble (see, for example,~\cite{trotter1996graphs}).  We then outline the proof of Theorem~\ref{thm:bipartite_dimension}, which is carried out in Section~\ref{sec:mainproof}.

\section{Preliminaries and proof overview}\label{sec:preliminaries}

A \emph{poset} $P$ consists of a ground set $X$ together with a partial order $\preceq_P$ on $X$ (i.e., a reflexive, anti-symmetric, transitive relation on $X$). For elements $x,y\in X$ with $x \preceq_P y$ and $x\ne y$, we write $x\prec_P y$ (we omit the subscript $P$ whenever it is clear from context). We say that elements $x,y\in X$ are \emph{comparable} if $x \preceq_P y$ or $y \preceq_P x$. In this paper, all posets are assumed to be finite, meaning that their ground sets are finite.

The \emph{comparability graph} $G$ of a poset $P$ is the graph on its ground set $X$ where there is an edge between distinct vertices $x$ and $y$ if and only if $x$ and $y$ are comparable in $P$.

A \emph{linear order} $L$ on a finite set $X$ is an ordering $(x_1,\dots,x_n)$ of the elements of $X$. From such a linear order $L$, one can obtain a poset on $X$ where $x_i\prec_L x_j$ for all $i,j\in \{1,\dots,n\}$ with $i<j$. We say that the linear order $L$ is a \emph{linear extension} of a poset $P$ on the ground set $X$ if we have $x\prec_L y$ for all $x,y\in X$ with $x\prec_P y$. 

Given linear orders $L_1, \ldots, L_d$ on a set $X$, we construct the poset $P = \bigcap_{i = 1}^d L_i$ on $X$ by defining for all $x,y\in X$ that
$$
    x \prec_P y \; \Longleftrightarrow \; x \prec_{L_i} y \text{ for every } i \in \{1, \ldots, d\}.
$$
Note that then $L_1, \ldots, L_d$ are linear extensions of $P = \bigcap_{i = 1}^d L_i$.
For a given poset $P$ with ground set $X$, the \emph{dimension} $\dim(P)$ is defined to be the smallest integer $d\ge 0$ for which there exist linear orders $L_1, \ldots, L_d$ on $X$ such that $P = \bigcap_{i = 1}^d L_i$. In other words, this is the minimum possible number of linear extensions of $P$ such that, for any distinct $x,y\in X$ such that $x\not\prec y$, the element $x$ appears after $y$ in at least one of these linear extensions.

\begin{figure}[b]
\center
\begin{tikzpicture}[
    vertex/.style={
        circle,
        draw,
        fill=white,
        inner sep=0pt,
        minimum size=4pt,
        line width=0.6pt
    },
    edge/.style={line width=0.7pt},
    every node/.style={font=\large},
    scale=0.7
]

\draw (0,2.6) ellipse [x radius=3.65, y radius=0.52];
\draw (0,0)   ellipse [x radius=3.65, y radius=0.52];

\node[anchor=west] at (3.95,2.6) {$T$};
\node[anchor=west] at (3.95,0)   {$B$};

\coordinate (t1) at (-2.00,2.6);
\coordinate (t2) at (-0.25,2.6);
\coordinate (t3) at ( 1.30,2.6);
\coordinate (y)  at ( 2.25,2.6);

\coordinate (b1) at (-1.75,0);
\coordinate (b2) at (-0.10,0);
\coordinate (x)  at ( 2.25,0);

\draw[edge] (b1) -- (t1);
\draw[edge] (b1) -- (t2);
\draw[edge] (b1) -- (t3);
\draw[edge] (b2) -- (t2);
\draw[edge] (b2) -- (t3);
\draw[edge] (b2) -- (y);
\draw[edge] (x)  -- (y);

\foreach \v in {t1,t2,t3,y,b1,b2,x}
    \node[vertex] at (\v) {};

\node[right=5pt] at (y) {$v$};
\node[right=5pt] at (x) {$w$};
\node[anchor=west] at (2.3,1.30) {\small $w \prec v$};

\end{tikzpicture}
\caption{A poset with vertices in $T$ at the top and vertices in $B$ at the bottom.}
\label{fig}
\end{figure}

To prove Theorem \ref{thm:main}, it turns out to be sufficient to consider posets $P$ with a special structure. Namely, we can assume that the comparability graph $G$ of $P$ is bipartite with vertex classes $T$ and $B$, such that for every edge $vw$ of $G$ with $v\in T$ and $w\in B$ we have $w\prec_P v$. As in Figure \ref{fig}, we can imagine that the vertices in $T$ are at the top and the vertices in $B$ are at the bottom, such that the edges of the comparability graph $G$ correspond to the pairs $v\in T$ and $w\in B$ with $w\prec v$ (and no two elements within $T$, or within $B$, are comparable and we do not have $v\prec w$ for any $v\in T$ and $w\in B$).

We can make this assumption due to the following lemma, originally proved by Kimble in 1972 (see~\cite[Theorem~4]{TrotterMoore1976}). For convenience of the reader, we provide a proof of the lemma in the appendix.

\begin{restatable}{lemma}{kimble}\label{lem:kimble}
    Let $P$ be a poset with ground set $X$. Let $P'$ be the poset with ground set $X \times \{0,1\}$ such that, for all $x,y\in X$, we have
    \begin{compactitem}
        \item[a)] $(x, 0) \not \prec_{P'} (y,0)$,
        \item[b)] $(x, 1) \not \prec_{P'} (y,1)$,
        \item[c)] $(y, 1) \not \prec_{P'} (x,0)$, and
        \item[d)] $(x, 0) \prec_{P'} (y,1) \Longleftrightarrow x \preceq_P y$.
    \end{compactitem}
    Then $\dim(P) \le \dim(P')$.
\end{restatable}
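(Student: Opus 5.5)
Suppose $\dim(P')=t$, witnessed by linear orders $L'_1,\dots,L'_t$ on $X\times\{0,1\}$ with $P'=\bigcap_i L'_i$. The plan is to extract from each $L'_i$ a linear order $L_i$ on $X$, and then check that these $t$ orders realize $P$. This gives $\dim(P)\le t$.

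\textbf{Defining the orders.} For each $i$, let $L_i$ be the order on $X$ induced by the positions of the copies $(x,0)$ in $L'_i$: set $x\prec_{L_i} y$ if and only if $(x,0)\prec_{L'_i}(y,0)$.

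\textbf{Each $L_i$ is a linear extension of $P$.} Let $x\prec_P y$. By (d), using $x\preceq_P x$, we have $(x,0)\prec_{P'}(x,1)$. Also by (d), $(x,0)\prec_{P'}(y,1)$. These alone do not order $(x,0)$ against $(y,0)$, so we need to exclude $(y,0)\prec_{L'_i}(x,0)$. Suppose it held. Then $(y,0)\prec_{L'_i}(x,0)\prec_{L'_i}(x,1)$. Since $y\not\preceq_P x$ (because $x\prec_P y$ and $x\neq y$), (d) gives $(y,0)\not\prec_{P'}(x,1)$. That alone is no contradiction, since $L'_i$ is only one of the orders. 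The right argument compares $(x,0)$ with $(y,1)$ directly: $(x,0)\prec_{P'}(y,1)$ holds, so it holds in every $L'_i$, but this still does not order $(x,0)$ against $(y,0)$.

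So the naive definition of $L_i$ probably fails. The fix is to modify the orders before extracting $L_i$.

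\textbf{Normalization step.} Replace each $L'_i$ by an order in which every element of the form $(\cdot,0)$ is pushed as early as possible, subject to remaining a linear extension of $P'$. Concretely, repeatedly swap adjacent pairs $(u,1),(v,0)$ with $(u,1)$ immediately before $(v,0)$. Such a swap is legal because, by (c), $(u,1)\not\prec_{P'}(v,0)$. Swapping only moves $0$-elements earlier past $1$-elements, so it preserves every relation $(a,0)\prec(b,1)$. Hence each new order is still a linear extension of $P'$. It also preserves the relative order within level $0$ and within level $1$.

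\textbf{The main obstacle.} The difficulty is the claim that $\bigcap_i L'_i$ is unchanged by the normalization, i.e.\ that every incomparable pair is still reversed somewhere. This step is not automatic. Pairs $(u,1),(v,0)$ with $v\not\preceq_P u$ may have been reversed only via $(v,1)$-type witnesses.

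\textbf{Alternative: read off from level $1$ instead.} Rather than relying on the normalization, I would try the cleaner route that avoids it. Define $x\prec_{L_i} y$ if and only if $(x,1)\prec_{L'_i}(y,1)$, with the pattern chosen so that extension and realizer properties both follow from comparing $(x,0)$ and $(y,1)$.

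\emph{Realizer property.} For $x\not\preceq_P y$, (d) gives $(x,0)\not\prec_{P'}(y,1)$. So some $i$ has $(y,1)\prec_{L'_i}(x,0)\prec_{L'_i}(x,1)$, using that $(x,0)\prec_{P'}(x,1)$. Hence $y\prec_{L_i}x$, as required.

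\emph{Extension property.} I expect this to be the hard part: showing that $x\prec_P y$ forces $(x,1)\prec_{L'_i}(y,1)$ in every $L'_i$. I would first try to choose the witnessing realizer minimal in a suitable sense. If that fails, I would take instead the linear extension of $P$ generated by $L_i$ together with $P$. This works since transitivity lets one verify that the reversals found above are never contradicted by $P$: a reversal $y$ before $x$ only occurs when $x\not\preceq_P y$.
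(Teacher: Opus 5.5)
\textbf{There is a genuine gap in the extension property.} Your realizer argument for the level-$1$ restriction is correct, and reading off level $1$ is also where the paper starts. But the extension property is the real content of the lemma, and the statement you hope to prove there is false: $x\prec_P y$ does not force $(x,1)\prec_{L'_i}(y,1)$. Take $X=\{a,b\}$ with $a\prec_P b$. The orders $(b,0),(a,0),(b,1),(a,1)$ and $(a,0),(a,1),(b,0),(b,1)$ form a realizer of $P'$, and the first one puts $b$ before $a$ on level $1$. So the level-$1$ orders must be repaired, and your fallback does not do this.

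Three problems with the fallback:
\begin{itemize}
\item \emph{The linear extension of $P$ generated by $L_i$ together with $P$} is not well defined, since $P\cup L_i$ can contain a $2$-cycle (as $a,b$ above).
\item Your justification checks each reversal against $P$ separately, and pairwise compatibility is not enough. If $y'\prec_P y$ and $x\prec_P x'$ are the only relations, then ``$y$ before $x$'' and ``$x'$ before $y'$'' are each compatible with $P$, but no linear extension of $P$ has both.
\item You would still have to show that the repaired order keeps the reversals you established for the unrepaired one.
\end{itemize}

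\textbf{The missing idea.} Fix which reversals must survive in the $i$-th order: $y$ before $x$ whenever $(y,1)\prec_{L'_i}(x,0)$. Then use the structure of $L'_i$. By (d), $(y,1)\prec_{L'_i}(x,0)\prec_{L'_i}(x',1)$ for every $x'\succeq_P x$, so $y$ precedes the whole up-set of $x$ on level $1$. It is this joint property, not merely $x\not\preceq_P y$, that makes all required reversals simultaneously realizable.

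The paper turns this into an explicit repair. It scans the level-$1$ order and moves each element to just before the earliest preceding element lying above it in $P$. A minimal-counterexample argument shows the result extends $P$. A first-moment argument shows that no element of the up-set of $x$ ever jumps over $y$.

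A shorter way to complete your route: let $\mathrm{pos}_i$ denote position in $L'_i$, and set $g(z)=\min\{\mathrm{pos}_i(z',1): z'\succeq_P z\}$. Order $X$ by $g$, breaking ties by a fixed linear extension of $P$. Then $x\prec_P y$ implies $g(x)\le g(y)$, while $(y,1)\prec_{L'_i}(x,0)$ implies $g(y)\le \mathrm{pos}_i(y,1)<\mathrm{pos}_i(x,0)<g(x)$. This single order gives both the extension and the realizer property.
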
 

Applying this lemma to a poset $P$ with ground set $X$, we obtain a poset $P'$ with ground set $X \times \{0,1\}$ such that all the edges in the comparability graph of $P'$ are between $X \times \{0\}$ and $X \times \{1\}$. This comparability graph is indeed bipartite with vertex classes $T=X \times \{1\}$ and $B=X \times \{0\}$, and every edge in this comparability graph is between some $(y,1)\in T$ and $(x,0)\in B$ such that $(x, 0) \prec_{P'} (y,1)$. 

Thus, in order to prove Theorem \ref{thm:main}, it suffices to prove the following version of the theorem with the additional assumption that the comparability graph of $P$ has such a special bipartite structure.

\begin{theorem}\label{thm:bipartite_dimension}
Let $\Delta\ge 6$ be an integer. Let $P$ be a poset whose comparability graph $G$ has maximum degree at most $\Delta$. Suppose that $G$ is bipartite with vertex classes $T$ and $B$ such that, for every edge $vw$ of $G$ with $v\in T$ and $w\in B$, we have $w\prec_P v$. Then $\dim(P)\le 21\Delta \log \Delta$.
\end{theorem}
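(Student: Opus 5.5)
We reduce the problem to reversing critical pairs. In the bipartite setting, the pairs that must be reversed are the incomparable pairs $(w,v)$ with $w\in B$, $v\in T$ and $w\not\prec v$. For each such pair we need a linear extension in which $v$ precedes $w$. Pairs inside $T$ or inside $B$, and pairs with $v\in T$, $w\in B$, are easily handled: a fixed extension placing all of $B$ below all of $T$, together with its reverse restricted to within classes, costs only $O(1)$ extensions.

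Next we fix a canonical form for the extensions. Each extension $L$ is determined by a linear order on $T\cup B$ in which every $w\in B$ lies below all its neighbours $N(w)\subseteq T$. Given an ordering $\sigma$ of $T$, the natural extension places each $w\in B$ immediately below the lowest element of $N(w)$. Then a pair $(w,v)$ with $v\notin N(w)$ is reversed exactly when $v$ lies below every element of $N(w)$ in $\sigma$. It therefore suffices to find $k\le 21\Delta\log\Delta$ orderings of $T$ such that for every $w\in B$ and every $v\in T\setminus N(w)$, some ordering puts $v$ below all of $N(w)$. This is a ``$\Delta$-suitable''-type covering condition, restricted to the at most $|T|$ relevant pairs per $w$.

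For the covering itself, a uniformly random ordering reverses a given pair with probability $1/(\Delta+1)$. The expected failure probability over $k$ orderings is therefore $(1-1/(\Delta+1))^k\approx e^{-k/\Delta}$. Since the conflicts are local, we would use the Lovász Local Lemma. The event ``$(w,v)$ is never reversed'' depends only on the relative positions of $N(w)\cup\{v\}$. Random orderings are not product measures, so instead we give each $t\in T$ independent uniform labels in $[0,1]^k$ and order by coordinate. The event for $(w,v)$ then depends on the labels of $N(w)\cup\{v\}$ only.

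The main obstacle is that $v$ ranges over all of $T$, so there are unboundedly many pairs per $w$ and the dependency degree is unbounded. The fix is to note that the failure for $(w,v)$ is monotone in $v$'s label: it is worst when $v$'s coordinates are all large. So we handle ``typical'' $v$ by a threshold argument. In each coordinate $i$, call $w$ \emph{good} if $\min_{t\in N(w)} x_t^{(i)}\ge \theta$ for a threshold $\theta\approx 1/\Delta$. Call $v$ \emph{low in coordinate $i$} if $x_v^{(i)}<\theta$. Then a pair $(w,v)$ succeeds if some coordinate is both good for $w$ and low for $v$.

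This leads us to bad events of two kinds: $A_w$, that $w$ is good in fewer than about $k/e$ coordinates, and $C_v$, that $v$ is low in fewer than about $k\theta/2$ coordinates. Each of these involves at most $\Delta+1$ vertices and has probability $e^{-\Omega(k/\Delta)}$. Choosing $k=C\Delta\log\Delta$ makes these probabilities at most $\Delta^{-3}$, which beats the dependency degree of $O(\Delta^2)$.

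Even with both bad events avoided, the good coordinates of $w$ and the low coordinates of $v$ could be disjoint sets. The remedy is to randomise further: split the coordinates into $\Delta\log\Delta$ blocks, and within each block use a fresh independent random shift, so that for fixed $v$ and $w$ the coincidence of good and low coordinates is forced. Alternatively, we can use $k=\Theta(\Delta\log\Delta)$ and apply the Local Lemma to the pair events $(w,v)$ with $v$ at distance at most $2$ from $w$, which are the only pairs with dependency. For $v$ far from $w$, the positions are independent of $N(w)$, and we handle them via a single fixed ``topological'' extra family of $O(\Delta)$ orderings, namely a proper colouring of $G^2$ restricted to $T$.

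Getting the constant $21$ should follow from tracking $(1-1/(\Delta+1))^k\le \Delta^{-c}$ with $k=21\Delta\log\Delta$.
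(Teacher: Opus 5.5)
Your reduction matches the paper's. Two class-wise extensions handle the easy pairs, and each $w\in B$ is inserted directly below its lowest neighbour, so it suffices to find few orders of $T$ in which each $u\in T\setminus N(w)$ precedes all of $N(w)$. The gap is the covering step, which is the whole difficulty. You correctly see that each $w$ must be served by unboundedly many $u$, but none of your remedies deals with it.

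With independent continuous labels on $T$, condition on the labels of $N(w)$. Then every $u\in T\setminus N(w)$ fails independently with the same probability $q=\prod_i\bigl(1-\min_{t\in N(w)}x_t^{(i)}\bigr)>0$. So the event that $w$ is served by every $u$ has probability $\mathbb{E}\bigl[(1-q)^{|T\setminus N(w)|}\bigr]$, which tends to $0$ as $|T|$ grows. It is not a rare bad event. Splitting it into pair events $(w,u)$ gives each of them unboundedly many dependencies, since all $(w,u')$ share the labels of $N(w)$.

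Your events $A_w,C_v$ do not imply success, as you note yourself. Per-block random shifts can at best make each \emph{fixed} pair likely to succeed, which hits the same wall. In your alternative, the near pairs ($u$ at distance $3$ from $w$) are indeed fine by the Local Lemma. But the far pairs are essentially the entire problem, and the ``fixed family of $O(\Delta)$ orderings from a proper colouring of $G^2[T]$'' is unsupported. If the orders list colour classes, every colour must precede every realisable $\Delta$-set $\phi(N(w))$ of other colours. In general this is the ``all sets'' property behind the F\"uredi--Kahn bound, and on a palette of $\Theta(\Delta^2)$ colours it already needs of order $\Delta^2$ orders (Dushnik's computation of the dimension of $1$-sets versus $k$-sets). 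The closing sentence about the constant $21$ is not a derivation either.

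The missing idea is to make the set of possible ``labels'' of vertices of $T$ finite and polynomial in $\Delta$, and to design it before labelling. In Lemma~\ref{lem:grouping_lemma} the paper takes $\lfloor 10\Delta\log\Delta\rfloor$ random orders of a palette $[\Delta^5]$. For a fixed $\Delta$-set $F$ and colour $x$ the failure probability is at most $\Delta^{-8}$. A union bound over all $\Delta^5$ colours then shows that all but a $\Delta^{-3}$-fraction of the sets $F$ are good \emph{for every colour simultaneously}.

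The paper then colours $T$ uniformly at random from $[\Delta^5]$. The bad event for $w$ is that $\phi(N(w))$ is not rainbow or lies in no good $F$. It depends only on the colours of $N(w)$, has probability at most $2\Delta^{-3}$ and has at most $\Delta^2$ dependencies, so the Local Lemma applies.

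Once $\phi(N(w))$ is rainbow and lies in a good $F$, every $u\in T\setminus N(w)$ is served, however far away, because only its colour matters. This is what forces $q=0$ and removes the union over $u$. The case where $u$ shares its colour with the (necessarily unique) neighbour of $w$ of that colour is handled by using each colour order twice, with the orders inside colour classes reversed. This gives $2|\mathcal{L}|+2\le 21\Delta\log\Delta$ extensions.
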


\begin{proof}[Proof of Theorem \ref{thm:main} assuming Theorem \ref{thm:bipartite_dimension}]
    Let $P$ be a poset whose comparability graph has maximum degree at most $d$. We form the poset $P'$ as in Lemma \ref{lem:kimble}. By that lemma, then, we have $\dim(P) \le \dim(P')$. Furthermore, letting $\Delta=d+1$, the comparability graph $G$ of the poset $P'$ has degree at most~$\Delta$. Indeed, for each element $(x,0)\in X \times \{0\}$, the only neighbours of $(x,0)$ in $G$ are the (at most~$d$) elements $(y,1)\in X \times \{1\}$ with $x \prec_P y$ and the element $(x,1)$. Similarly, for each element $(y,1)\in X \times \{1\}$, the only neighbours of $(y,1)$ in $G$ are the (at most $d$) elements $(x,0)\in X \times \{0\}$ with $x \prec_P y$ and the element $(y,0)$.

    Observing that the poset $P'$ satisfies the assumptions in Theorem \ref{thm:bipartite_dimension} with $B=X \times \{0\}$ and $T=X \times \{1\}$, we can conclude from this theorem that
    \[\dim(P) \le\dim(P')\le 21\Delta \log \Delta=21(d+1) \log (d+1)\le 50 d\log d.\qedhere\]
\end{proof}

The remainder of this paper is devoted to proving Theorem~\ref{thm:bipartite_dimension}. Instead of directly constructing linear orders on $T\cup B$, we use a reformulation of F\"uredi and Kahn~\cite{furedi1986dimensions}. 
Any linear order on $T$ can be extended to a linear order on $T\cup B$ by adding the elements of $B$. In order for this to create a linear extension of $P$, each element of $B$ has to be before all of its neighbours in $T$ in the comparability graph $G$. An element $w\in B$ can be put directly before its smallest neighbour $v\in T$ in order to  `encode' as many `non-adjacencies' $w\nprec u$ with $w\in B$ and $u\in T$ as possible. (All the relationships $w\nprec u$ with $w,u\in B$, or $w,u\in T$, or $w\in T$ and $u\in B$, can be `encoded' easily with just two linear extensions of $P$.) 

This brings us to the goal of constructing $O(\Delta\log \Delta)$ linear orders on $T$ such that, for each $w\in B$ and each non-neighbour $u\in T$ of $w$, in one of these linear orders $u$ appears before all of the neighbours of $w$. 
The difficulty here is that we have to use very few linear orders. Letting $n=|T\cup B|$ be the ground set size, taking say $10\Delta \log n$ independent uniformly random linear orders of $T$ would, with high probability, satisfy this condition for each of the (at most $n^2$) relevant pairs $(w,u)$. Here, the term of $\log n$ is to enable a union bound over all of these pairs.

To circumvent this dependency on $n$, we will first colour the vertices in $T$ using $\Delta^{5}$ different colours. We then take a collection $\mathcal{L}$ of $O(\Delta\log \Delta)$ linear orders on the colours, and convert each $L\in \mathcal{L}$ into two linear orders on $T$: firstly listing the vertices of $T$ in order of their colours according to $L$ (ordering vertices of each colour arbitrarily) and then again listing the vertices of $T$ in order of their colours according to $L$ but where the order of 
the vertices of the same colour is reversed. The resulting linear orders on $T$ will have the property we need if, for every $w\in B$, the neighbours of $w$ have distinct colours and, for every non-neighbour $u\in T$ of $w$, there is an a linear order $L\in \mathcal{L}$ of the colours in which the colour of $u$ appears at least as early as the colour of any neighbour of $w$. Then, in one of the orders constructed on $T$, the vertex $u$ comes before all of the neighbours of $w$. (When $u$ has the same colour as some neighbour $v$ of $w$, the vertices $u$ and $v$ appear in opposite orders in the two linear orders on $T$ created from $L$. As there is at most one neighbour of $w$ of the same colour as $u$, in one of these two orders $u$ comes before all the neighbours of $w$.)

Now, the most natural approach would be to try to find a small collection $\mathcal{L}$ linear orders of the $\Delta^{5}$ colours, such that for any set $F$ of $\Delta$ colours, and any colour $c$, in one of the linear orders in $\mathcal{L}$, the colour $c$ appears before all colours in $F\setminus \{c\}$. Then we could satisfy the desired conditions by taking any colouring of the vertices of $T$ in which, for each $w\in B$, all neighbours of $w$ have distinct colours (e.g.\ by using a greedy colouring). This approach avoids the dependency on $n$, since we are now asking a problem stated purely in terms of $\Delta$ (namely, to find a collection $\mathcal{L}$ of $O(\Delta\log \Delta)$ linear orders of a set of size $\Delta^{5}$ with certain conditions). This is also essentially the approach of F\"uredi and Kahn~\cite{furedi1986dimensions} (see   the discussion after Lemma~\ref{lem:grouping_lemma}).

However, the key to our improved bound is to relax the conditions for the collection $\mathcal{L}$ of linear orders of the colours. Indeed, we only demand that for \emph{almost every} set $F$ of $\Delta$ colours it holds that for each colour $c$ there is a linear order in $\mathcal{L}$ in which the colour $c$ appears before all colours in $F\setminus \{c\}$.  While relaxing the condition for $F$, it is crucial to still take \emph{all} (and not almost all) colours $c$. 

Therefore, our collection $\mathcal{L}$ of $O(\Delta\log\Delta)$ linear orders of colours will not have the property we need for \emph{all} of the colourings of the vertices of $T$ in which, for each $w\in B$, all neighbours of $w$ have distinct colours, only for \emph{most} of the colourings. Nevertheless, choosing the colouring of the vertices randomly, we can show that the resulting colouring meets our requirements with positive probability, using the Lov\'asz Local Lemma in the following form. (For a proof of the Lov\'asz Local Lemma see, for example, \cite[Section~5.1]{AlonSpencer}.)

\begin{lemma}[Lov\'asz Local Lemma]
\label{lem:LLL}
Consider a sequence $X_1,\dots,X_n$ of independent random variables in some probability space. Let $I_1,\dots,I_m\su [n]$ be subsets of the index set $[n]$, and let $\mathcal{E}_1,\dots,\mathcal{E}_m$ be events where, for each $j\in [m]$, the event $\mathcal{E}_j$ only depends on the outcomes of the random variables $X_i$ with $i\in I_j$. Furthermore, let $p\ge 0$ be such that $\mathbb{P}[\mathcal{E}_j]\le p$ for all $j\in[m]$.
Finally, let $D>0$ be such that, for each $j\in [m]$, there are at most $D$ indices $j'\in [m]$ such that $I_j\cap I_{j'}\ne \emptyset$.

If $epD\leq 1$, then with positive probability none of the events $\mathcal{E}_1,\dots,\mathcal{E}_m$ holds.
\end{lemma}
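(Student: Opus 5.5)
The statement is the symmetric Lov\'asz Local Lemma. I would reduce it to the standard symmetric form and then run the usual inductive conditioning argument.

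\textbf{Reduction.} First I would check that it is a special case of the classical version. For each $j$, let $\Gamma(j)$ be the set of indices $j'\ne j$ with $I_j\cap I_{j'}\neq\emptyset$. Since $j$ itself is among the at most $D$ indices $j'$ with $I_j\cap I_{j'}\ne\emptyset$, we get $|\Gamma(j)|\le D-1=:d$. The hypothesis then reads $ep(d+1)\le 1$. Next I would verify that $\mathcal{E}_j$ is mutually independent of the family $\{\mathcal{E}_k : k\notin \Gamma(j)\cup\{j\}\}$. Every such $\mathcal{E}_k$ is determined by $(X_i)_{i\in I_k}$ with $I_k\cap I_j=\emptyset$, so any Boolean combination of them is measurable with respect to $(X_i)_{i\notin I_j}$. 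That sigma-algebra is independent of $(X_i)_{i\in I_j}$ by independence of the $X_i$. This is the only place where the product structure is used.

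\textbf{Key claim.} I would prove by induction on $|S|$ that for every $S\subseteq[m]$ and $j\notin S$ with $\mathbb{P}[\bigcap_{k\in S}\overline{\mathcal{E}_k}]>0$,
\[\mathbb{P}\Bigl[\mathcal{E}_j\Bigm|\textstyle\bigcap_{k\in S}\overline{\mathcal{E}_k}\Bigr]\le \frac{1}{d+1}.\]
If $d=0$, all events are mutually independent. Then $\mathbb{P}[\bigcap_j\overline{\mathcal{E}_j}]=\prod_j(1-\mathbb{P}[\mathcal{E}_j])>0$, since $p\le 1/e<1$, and we are done; so assume $d\ge1$.

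For the inductive step, split $S=S_1\cup S_2$ with $S_1=S\cap\Gamma(j)$ and $S_2=S\setminus\Gamma(j)$. If $S_1=\emptyset$, mutual independence gives the bound $p\le 1/(e(d+1))$. Otherwise write the conditional probability as $A/B'$, where
\[A=\mathbb{P}\Bigl[\mathcal{E}_j\cap\textstyle\bigcap_{S_1}\overline{\mathcal{E}_k}\Bigm|\bigcap_{S_2}\overline{\mathcal{E}_k}\Bigr]\le \mathbb{P}\Bigl[\mathcal{E}_j\Bigm|\bigcap_{S_2}\overline{\mathcal{E}_k}\Bigr]=\mathbb{P}[\mathcal{E}_j]\le p,\]
and
\[B'=\mathbb{P}\Bigl[\textstyle\bigcap_{S_1}\overline{\mathcal{E}_k}\Bigm|\bigcap_{S_2}\overline{\mathcal{E}_k}\Bigr].\]
Enumerate $S_1=\{k_1,\dots,k_r\}$ with $r\le d$, and expand $B'$ by the chain rule. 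Each factor is at least $1-\frac{1}{d+1}$ by the induction hypothesis applied to strictly smaller conditioning sets. Hence
\[B'\ge\Bigl(1-\tfrac{1}{d+1}\Bigr)^{d}\ge \tfrac1e,\]
which gives $A/B'\le ep\le \frac{1}{d+1}$.

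\textbf{Conclusion.} The chain rule gives
\[\mathbb{P}\Bigl[\textstyle\bigcap_{j}\overline{\mathcal{E}_j}\Bigr]\ge\Bigl(1-\tfrac{1}{d+1}\Bigr)^m>0.\]
Positivity of each conditioning event along the way follows inductively from the same product bound, so the conditional probabilities are well defined.

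\textbf{Main obstacle.} The step needing the most care is the bookkeeping of positivity of the conditioning events together with the correct ordering of the induction. The induction should be on $|S|$, proving positivity of $\mathbb{P}[\bigcap_S\overline{\mathcal{E}_k}]$ simultaneously. The estimate $(1-\frac1{d+1})^d\ge 1/e$ is routine.
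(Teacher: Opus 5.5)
Your proof is correct and is the standard argument the paper defers to (Alon--Spencer, Section~5.1). The variable setting gives a dependency graph with $|\Gamma(j)|\le D-1$, and your induction on $|S|$ with the bound $\frac{1}{d+1}$ is the general local lemma with $x_j=\frac{1}{d+1}$, specialised directly to the symmetric case. One small point: $j$ is counted among its own $D$ indices only when $I_j\neq\emptyset$, but if $I_j=\emptyset$ then $\Gamma(j)=\emptyset$ anyway, so your bound $|\Gamma(j)|\le D-1$ still holds whenever $D\ge 1$, which covers the paper's application $D=\Delta^2$.
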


\paragraph{Notation.}
All logarithms are to base $e$. For a positive integer $n$, we write $[n]=\{1,\dots,n\}$, as usual. For $0\le k\le n$, we furthermore write $\binom{[n]}{k}$ for the family of all $k$-element subsets of $[n]=\{1,\dots,n\}$. In a graph $G$, for a vertex $v$, we write $N(v)$ for the neighbourhood of $v$.

\section{Proof of Theorem \ref{thm:bipartite_dimension}}\label{sec:mainproof}

The following lemma is at the heart of our proof of Theorem \ref{thm:bipartite_dimension}.

\begin{lemma}
\label{lem:grouping_lemma} For each integer $\Delta\geq 6$, there exists a family $\mathcal{F}\subseteq \binom{[\Delta^{5}]}{\Delta}$
with 
\[
|\mathcal{F}|\geq \left(1-\frac{1}{\Delta^3}\right)\binom{\Delta^{5}}{\Delta}
\]
and a collection $\mathcal{L}$ of  linear orders of $[\Delta^{5}]$ of size $|\mathcal{L}|\le 10\Delta\log \Delta$ such that the following holds. For each $F\in \mathcal{F}$ and $x\in [\Delta^{5}]$, there is some $ L \in \mathcal{L}$ with $x \prec_L y$ for all $y\in F \setminus \{x\}$. 
\end{lemma}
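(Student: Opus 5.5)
The plan is a first-moment argument over a random choice of $\mathcal{L}$, followed by averaging. Set $N=\Delta^{5}$ and $m=\lfloor 10\Delta\log\Delta\rfloor$. Let $\mathcal{L}=\{L_1,\dots,L_m\}$ consist of $m$ independent, uniformly random linear orders of $[N]$. Then $|\mathcal{L}|\le 10\Delta\log\Delta$ automatically. We call a set $F\in\binom{[N]}{\Delta}$ \emph{bad} if some $x\in[N]$ has no $L\in\mathcal{L}$ with $x\prec_L y$ for all $y\in F\setminus\{x\}$. We then take $\mathcal{F}$ to be the family of sets that are not bad. The goal is to show
\[
\mathbb{E}\bigl[\#\{\text{bad } F\}\bigr]\le \Delta^{-3}\binom{N}{\Delta}.
\]
Some outcome then achieves at most the expectation, and that outcome gives the required $\mathcal{F}$ and $\mathcal{L}$.

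Fix $F$ and $x$. In a single uniform random order, $x$ comes first among the set $F\cup\{x\}$ with probability $1/|F\cup\{x\}|\ge 1/(\Delta+1)$. The size of this set is $\Delta$ if $x\in F$ and $\Delta+1$ otherwise. The $m$ orders are independent, so the pair $(F,x)$ fails with probability at most
\[
\Bigl(1-\tfrac{1}{\Delta+1}\Bigr)^{m}\le \exp\!\Bigl(-\tfrac{m}{\Delta+1}\Bigr).
\]
A union bound over the $N=\Delta^{5}$ choices of $x$ gives
\[
\mathbb{P}[F\text{ bad}]\le \Delta^{5}\exp\!\Bigl(-\tfrac{m}{\Delta+1}\Bigr).
\]
Linearity of expectation then yields $\mathbb{E}[\#\text{bad}]\le \binom{N}{\Delta}\Delta^{5}e^{-m/(\Delta+1)}$.

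It remains to check that $\Delta^{5}e^{-m/(\Delta+1)}\le\Delta^{-3}$, i.e.\ that $m\ge 8(\Delta+1)\log\Delta$. This is the only step needing care, because of the floor and the $\Delta+1$ in the denominator. From $m\ge 10\Delta\log\Delta-1$, it suffices that $2\Delta\log\Delta\ge 8\log\Delta+1$, i.e.\ $(2\Delta-8)\log\Delta\ge 1$. For $\Delta\ge 6$ we have $2\Delta-8\ge 4$ and $\log\Delta>1$, so this holds comfortably. I do not expect a genuine obstacle here.

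The conceptual point is the one emphasised before the lemma. We cannot demand success for all $F$, since a union bound over $\binom{N}{\Delta}$ sets would cost a factor of roughly $\Delta\log N$ in the exponent. But for a fixed $F$ we only need a union bound over the $\Delta^{5}$ colours $x$, which costs just $5\log\Delta$ in the exponent. This is affordable with $m=O(\Delta\log\Delta)$, and the leftover slack produces the $\Delta^{-3}$ fraction.
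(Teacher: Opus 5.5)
Your proposal is correct and matches the paper's proof step for step: it takes $\lfloor 10\Delta\log\Delta\rfloor$ independent uniform random orders, uses the $1/|F\cup\{x\}|$ symmetry bound, applies a union bound over the $\Delta^{5}$ choices of $x$, and finishes with linearity of expectation. Your explicit check that $\lfloor 10\Delta\log\Delta\rfloor\ge 8(\Delta+1)\log\Delta$ for $\Delta\ge 6$ also verifies the one inequality the paper states without justification.
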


Lemma \ref{lem:grouping_lemma} is arguably the main difference between our work and the work of F\"uredi and Kahn. A corollary of their main technical result, \cite[Lemma 3.3]{furedi1986dimensions}, is that if $\mathcal{F}$ is the family of all $\Delta$-subsets, then one can find a collection $\mathcal{L}$  of size $|\mathcal{L}| = O(\Delta^2 \log \Delta)$ satisfying the condition in Lemma \ref{lem:grouping_lemma}. By relaxing the condition to hold for every set $F\in \binom{[\Delta^{5}]}{\Delta}$ to only \emph{almost every} set, we save a factor of $\Delta$. Importantly, this relaxation still allows the main proof strategy to go through. We remark that they use that result with different parameters than above (leading to a loss of only $\log \Delta$ rather than $\Delta$), however, this illustrates where our improved bound comes from.

\begin{proof}[Proof of Lemma \ref{lem:grouping_lemma}] Let $t=\lfloor 10\Delta\log \Delta\rfloor\geq 8(\Delta+1)\log \Delta$. 
Let $\mathcal{L}$ be a set of $t$ linear orders on $[\Delta^{5}]$, each selected independently and uniformly at random from all such linear orders.
Let $\mathcal{F}$ be the family of $F\in \binom{[\Delta^{5}]}{\Delta}$ for which, for each $x\in [\Delta^{5}]$, there is some $L\in \mathcal{L}$ with $x \prec_L y$ for all $y\in F \setminus \{x\}$.
By the probabilistic method, it is then sufficient to show that $\mathbb{E}[|\mathcal{F}|]\geq \left(1-\frac{1}{\Delta^3}\right)\binom{\Delta^{5}}{\Delta}$.

Now, for each $F\in \binom{[\Delta^{5}]}{\Delta}$ and $x\in [\Delta^{5}]$, observe that, if $L$ is a uniformly random linear order of $[\Delta^{5}]$, then, by symmetry in the induced ordering of $F\cup \{x\}$, we have
\[
\mathbb{P}(x \prec_L y\text{ for all }y\in F \setminus \{x\})=\frac{1}{|F\cup\{x\}|}\geq \frac{1}{\Delta+1}.
\]
Therefore, for each  $F\in \binom{[\Delta^{5}]}{\Delta}$ and $x\in [\Delta^{5}]$, the probability there is no $L\in \mathcal{L}$ with $x \prec_L y$ for all $y\in F \setminus \{x\}$ is at most
\begin{equation}\label{eq:deltathirteen}
\left(1-\frac{1}{\Delta+1}\right)^{t}\leq \exp\left(-\frac{t}{\Delta+1}\right)\leq \exp(-8\log \Delta)= \frac{1}{\Delta^{8}}.
\end{equation}
Thus, for each $F\in \binom{[\Delta^{5}]}{\Delta}$, we have
\[
\mathbb{P}[F\in \mathcal{F}]\geq 1-\sum_{x\in [\Delta^{5}]}\mathbb{P}[\nexists L\in \mathcal{L}\text{ with\ }x \prec_L y\text{ for all }y\in F \setminus \{x\}]\overset{\eqref{eq:deltathirteen}}{\geq}1-\frac{\Delta^{5}}{\Delta^{8}}= 1-\frac{1}{\Delta^3}.
\]
By the linearity of expectation, we have $\mathbb{E}[|\mathcal{F}|]\geq \left(1-\frac{1}{\Delta^3}\right)\binom{\Delta^{5}}{\Delta}$, as required.
\end{proof}

Finally, we are ready for the proof of Theorem \ref{thm:bipartite_dimension}.

\begin{proof}[Proof of Theorem \ref{thm:bipartite_dimension}]
We apply Lemma~\ref{lem:grouping_lemma} to obtain a family $\mathcal{F}\subseteq \binom{[\Delta^{5}]}{\Delta}$ and a collection $\mathcal{L}$ of linear orders on $[\Delta^{5}]$ such that $|\mathcal{L}|\leq 10\Delta\log\Delta$, 
\[
|\mathcal{F}|\geq\left(1-\frac1{\Delta^3}\right)\binom{[\Delta^{5}]}{\Delta},
\]
and, for each $F\in \mathcal{F}$ and $x\in [\Delta^{5}]$, there is some $L\in \mathcal{L}$ with $x\prec_{L}y$ for all $y\in F\setminus \{x\}$.

Now, let $P$ be a poset with bipartite comparability graph $G$ with vertex classes $T$ and $B$ such that, for every edge $vw$ of $G$ with $v\in T$ and $w\in B$, we have $w\prec v$ in the poset $P$, and such that all the vertices in $G$ have degree at most $\Delta$.

\begin{claim}\label{claim:existence-colouring}
There exists a colouring $\phi:T\to [\Delta^{5}]$ of the vertices in $T$ with colours $1,\dots,\Delta^{5}$ such that the following two conditions hold for all $w\in B$:
\begin{compactitem}
        \item[(i)] $\phi(N(w))\subseteq F$ for some $F\in \mathcal{F}$ and
        \item[(ii)] $\phi(v)\neq\phi(v')$ for all distinct $v,v'\in N(w)$.
 \end{compactitem}
\end{claim}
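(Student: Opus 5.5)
We colour each vertex of $T$ independently and uniformly at random from $[\Delta^{5}]$, and then apply the Lov\'asz Local Lemma (Lemma~\ref{lem:LLL}). The random variables are $X_v=\phi(v)$ for $v\in T$. For each $w\in B$ we define a single bad event $\mathcal{E}_w$: either two distinct neighbours of $w$ receive the same colour, or $\phi(N(w))$ is not contained in any member of $\mathcal{F}$. This event depends only on the variables $X_v$ with $v\in N(w)$, so we take $I_w=N(w)$. If no event $\mathcal{E}_w$ occurs, then (i) and (ii) hold for every $w\in B$, which proves the claim. If $N(w)=\emptyset$, the event $\mathcal{E}_w$ is impossible ($\mathcal{F}$ is nonempty since $|\mathcal{F}|>0$), so we may drop it.

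Next we bound $\mathbb{P}[\mathcal{E}_w]$. Let $k=|N(w)|\le\Delta$. A collision among the neighbours of $w$ has probability at most $\binom{k}{2}\Delta^{-5}\le \Delta^{-3}/2$. Now condition on there being no collision. Then $\phi(N(w))$ is a uniformly random $k$-subset $S$ of $[\Delta^{5}]$. To finish, we extend $S$ to a uniformly random $\Delta$-set $F\supseteq S$, obtained by adding $\Delta-k$ further uniformly random colours. The resulting $F$ is a uniformly random element of $\binom{[\Delta^{5}]}{\Delta}$, so $\mathbb{P}[F\notin\mathcal{F}]\le\Delta^{-3}$. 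Whenever $F\in\mathcal{F}$, the set $S$ is contained in a member of $\mathcal{F}$. Hence the probability that $S$ lies in no member of $\mathcal{F}$ is at most $\Delta^{-3}$. Altogether $p:=\mathbb{P}[\mathcal{E}_w]\le \tfrac{3}{2}\Delta^{-3}$. The extension step is the one I expect to need the most care, since the condition is ``contained in some $F$'' rather than ``equal to some $F$''. The coupling above handles it cleanly.

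For the dependency count, $I_w\cap I_{w'}\neq\emptyset$ requires $w$ and $w'$ to share a neighbour in $T$. The set $N(w)$ has at most $\Delta$ vertices, each with at most $\Delta$ neighbours in $B$, so there are at most $D=\Delta^{2}$ such $w'$, counting $w$ itself. Then
\[
epD\le e\cdot\tfrac32\Delta^{-3}\cdot\Delta^{2}=\tfrac{3e}{2\Delta}<1 \quad\text{for }\Delta\ge 6.
\]
So the Lov\'asz Local Lemma gives, with positive probability, a colouring in which none of the events $\mathcal{E}_w$ holds. Such a colouring satisfies (i) and (ii) for all $w\in B$, as required.
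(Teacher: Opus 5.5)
Your proposal is correct and follows essentially the same route as the paper: a uniformly random colouring, one bad event $\mathcal{E}_w$ per $w\in B$ depending only on the colours of $N(w)$, extending $\phi(N(w))$ to a uniformly random $\Delta$-set to bound the failure of (i) by $\Delta^{-3}$, a union bound over pairs for (ii), $D=\Delta^2$, and the Lov\'asz Local Lemma. The only differences are cosmetic: you condition on there being no collision before extending (the paper extends unconditionally, which works because $\phi(N(w))$ is uniform given its size), and you use $\binom{k}{2}\le\Delta^2/2$ to get $\tfrac{3}{2}\Delta^{-3}$ instead of the paper's $2\Delta^{-3}$.
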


\claimproofstart
Let $\phi:T\to [\Delta^{5}]$ be a random colouring of $T$ obtained by, for each $v\in T$, choosing the colour $\phi(v)\in [\Delta^{5}]$ independently and uniformly at random. For each $w\in B$, we define $\mathcal{E}_w$ as the event that at least one of the two conditions (i) and (ii) fails.
We want to show via the Lov\'asz Local Lemma (as stated as Lemma~\ref{lem:LLL}) that, with positive probability, none of the events $\mathcal{E}_w$ for $w\in B$ holds (i.e., that with positive probability the conditions (i) and (ii) hold for all $w\in B$).

Recall that the colours $\phi(v)$ for all $v\in T$ are independent random variables, and note that, for each $w\in B$, the event $\mathcal{E}_w$ only depends on the outcomes of $\phi(v)$ for $v\in N(w)$. Furthermore, for each vertex $w\in B$, there are at most $\Delta^2$ vertices $w'\in B$ such that $N(w)\cap N(w')\ne \emptyset$. Indeed, in order to have $N(w)\cap N(w')\ne \emptyset$, the vertices $w$ and $w'$ need to have a common neighbour $v\in T$. Given $w\in B$, there are at most $\Delta$ neighbours $v$ of $w$, and, for each such $v$, there are at most $\Delta$ choices for a neighbour $w'$ of $v$, so in total there are indeed at most $\Delta^2$ choices for $w'$. This means that, when applying  Lemma~\ref{lem:LLL} to the random variables $\phi(v)$ for $v\in N(w)$ and the events $\mathcal{E}_w$ for $w\in B$, we can take $D=\Delta^2$.

We now claim that for each $w\in B$ we have $\mathbb{P}[\mathcal{E}_w]\le 2/\Delta^3$. Let $w\in B$, and recall that $|N(w)|\leq  \Delta$. Thus, we always have $|\phi(N(w))|\le \Delta$, and so we can extend $\phi(N(w))\su [\Delta^{5}]$ to a set $F\subseteq [\Delta^{5}]$ of size $|F|=\Delta$ with $\phi(N(w))\su F$ by adding $\Delta-|\phi(N(w))|$ elements to $\phi(N(w))$ uniformly at random. 
    The resulting set $F$ is a uniformly random subset of $[\Delta^{5}]$ of size $ \Delta$. 
    Since $\mathcal{F}$ contains at least a $(1-1/\Delta^3)$-fraction of the subsets of $[\Delta^{5}]$ of size $ \Delta$, we find that $\mathbb{P}[F\not \in \mathcal{F}]\le 1/\Delta^3$. Note that, whenever we have $F\in \mathcal{F}$, condition (i) holds for $w$. Consequently, we obtain
    \[
    \mathbb{P}[\text{(i) fails for }w]\leq \mathbb{P}[F\not \in \mathcal{F}]\leq \frac{1}{\Delta^3}.
    \]
    Furthermore, we observe that
       \[
    \mathbb{P}[\text{(ii) fails for }w]\leq \binom{\Delta}2 \frac1{\Delta^{5}}\leq \frac1{\Delta^3}.
    \]
    Indeed, for each of the $\binom{\Delta}2$ choices of distinct vertices $v,v'\in N(w)$, the probability that $\phi(v)=\phi(v')$ is $1/\Delta^{5}$. All in all, we obtain
    \[
    \mathbb{P}(\mathcal{E}_w)\leq \mathbb{P}[\text{(i) fails for }w]+\mathbb{P}[\text{(ii) fails for }w]\leq \frac{1}{\Delta^3}+\frac1{\Delta^3}=\frac{2}{\Delta^3}.
    \]
    We can therefore apply Lemma \ref{lem:LLL} (the Lov\'asz Local Lemma) with $D=\Delta^2$ and $p=2/\Delta^3$ to the events $\mathcal{E}_w$ for $w\in B$, noting that $epD=2e/\Delta\le 1$ (since $\Delta\geq 6$).
    Now, by Lemma \ref{lem:LLL}, with positive probability, none of the events $\mathcal{E}_w$ for $w\in B$ hold (meaning that (i) and (ii) are satisfied for all $w\in B$). Thus, there exists a colouring $\phi:T\to [\Delta^{5}]$ satisfying (i) and (ii) for all $w\in B$.
\claimproofend

The properties of a colouring given by the previous claim are used to deduce the existence of a collection of linear orders on $T$ which, ultimately, will allow us to control non-adjacencies in the comparability graph $G$.

\begin{claim}\label{claim:L-star}
There exists a collection $\mathcal{L}^*$ of $2|\mathcal{L}|$ linear orders on $T$ such that, for all $w\in B$ and $u\in T\setminus N(w)$, there exists $L^*\in \mathcal{L}^*$ with $u\prec_{L^*} v$ for all $v\in N(w)$. 
\end{claim}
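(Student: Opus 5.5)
We fix a colouring $\phi$ as provided by Claim~\ref{claim:existence-colouring}, and we fix once and for all an arbitrary linear order $\sigma$ on $T$. For each $L\in\mathcal{L}$ we define two linear orders on $T$. The order $L^*_1$ sorts $T$ primarily by colour according to $L$, meaning $v\prec_{L^*_1} v'$ whenever $\phi(v)\prec_L\phi(v')$, and breaks ties inside each colour class according to $\sigma$. The order $L^*_2$ is the same, except that ties inside each colour class are broken according to the reverse of $\sigma$. Both are clearly linear orders on $T$. We set $\mathcal{L}^*=\{L^*_1,L^*_2 : L\in\mathcal{L}\}$, which has at most $2|\mathcal{L}|$ elements; if some coincide, we pad with arbitrary orders or count with multiplicity.

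Now fix $w\in B$ and $u\in T\setminus N(w)$. By Claim~\ref{claim:existence-colouring}(i), there is $F\in\mathcal{F}$ with $\phi(N(w))\subseteq F$. Applying the property from Lemma~\ref{lem:grouping_lemma} with $x=\phi(u)$, we obtain $L\in\mathcal{L}$ with $\phi(u)\prec_L c$ for every $c\in F\setminus\{\phi(u)\}$. In particular, every $v\in N(w)$ with $\phi(v)\neq\phi(u)$ has a colour strictly after $\phi(u)$ in $L$. Hence $u$ precedes all such $v$ in both $L^*_1$ and $L^*_2$.

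It remains to handle neighbours $v\in N(w)$ with $\phi(v)=\phi(u)$. By Claim~\ref{claim:existence-colouring}(ii), the colours on $N(w)$ are distinct, so there is at most one such $v$, and $v\neq u$ because $u\notin N(w)$. If there is no such $v$, either order works. If there is one, then $u$ and $v$ lie in the same colour class, where their relative order in $L^*_1$ is given by $\sigma$ and in $L^*_2$ by its reverse. So in exactly one of $L^*_1, L^*_2$ we have $u\prec v$, and in that order $u$ precedes all of $N(w)$.

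The only delicate point is this same-colour case. It requires both the distinctness condition (ii), which ensures at most one clashing neighbour, and the fact that Lemma~\ref{lem:grouping_lemma} quantifies over all $x\in[\Delta^{5}]$, including colours lying inside $F$. Everything else is a direct unpacking of the definitions.
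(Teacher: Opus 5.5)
Your proof is correct and follows essentially the same route as the paper: your global tie-breaking order $\sigma$, restricted to each colour class, plays the role of the paper's orders $R_i$ in building $L^+$ and $L^-$, and the same-colour neighbour is handled identically via condition (ii). Your remark about coinciding orders (count with multiplicity) is a harmless detail that the paper leaves implicit.
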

\claimproofstart
Fix a colouring $\phi:T\to [\Delta^{5}]$ as in Claim \ref{claim:existence-colouring}. This colouring gives rise to a partition of $T$ into the colour classes $\phi^{-1}(i)$ for $i\in [\Delta^{5}]$. For each $i\in [\Delta^{5}]$, we fix an arbitrary linear order $R_i$ of the colour class $\phi^{-1}(i)$. 

For each $L \in \mathcal{L}$,  we now form two linear orders $L^+$ and $L^-$ on $T$ as follows.
Informally, $L$ determines the order of the colour classes $\phi^{-1}(i)$; within each colour class $\phi^{-1}(i)$, $L^+$ follows the order $R_i$ and $L^-$ follows the reverse of the order $R_i$. More formally, we define the linear orders $L^+$ and $L^-$ on $T$ such that, for distinct $v,v'\in T$ and $i\in [\Delta^{5}]$, the following holds:
\begin{itemize}
    \item  if $\phi(v)=\phi(v')=i$, then
\[
v\prec_{L^+}v' \iff v\prec_{R_i}v' \iff
v'\prec_{L^-}v,
\]
\item and if $\phi(v)\neq \phi(v')$, then
\[
v\prec_{L^+}v' \iff \phi(v)\prec_L\phi(v') \iff
v\prec_{L^-}v'.
\]
\end{itemize}
Let $\mathcal{L}^*$ denote the collection of the linear orders $L^+$ and $L^-$ obtained in this way for all $L\in \mathcal{L}$.

We now show that, for each $w\in B$ and $u\in T\setminus N(w)$, there is a linear order in $\mathcal{L}^*$ for which $u$ appears before all vertices in $N(w)$ (that is, $u\prec v$ for all $v\in N(w)$).
By condition~(i) for the colouring $\phi$, there exists $F \in \mathcal{F}$ with $\phi(N(w))\subseteq F$. By the property of $\mathcal{F}$ (applied with $x=\phi(u)$), there exists $L\in \mathcal{L}$ with $\phi(u)\prec_L y$ for all $y\in F\setminus \{\phi(u)\}$.
This implies that $\phi(u)\prec_{L} \phi(v)$ for each $v\in N(w)$ such that $\phi(v)\neq \phi(u)$.
By definition, for any such $v\in N(w)$, we find that $u\prec_{L^+}v$ and $u\prec_{L^-}v$.

By condition (ii) for the colouring $\phi$, there is at most one vertex $v\in N(w)$ with $\phi(v)=\phi(u)$. If such a vertex $v\in N(w)$ does exist, then $u$ appears before $v$ in exactly one of
the linear orders $\prec_{L^+}$ or $\prec_{L^-}$ (this depends on the ordering $R_{\phi(u)}$ within the colour class of $u$ and $v$), and for that linear order $u$ appears before all vertices in $N(w)$. Therefore, for all $w\in B$ and $u\in T\setminus N(w)$ there exists $L\in \mathcal{L}$ such that $u$ appears before $N(w)$ in either $\prec_{L^+}$ or $\prec_{L^-}$ (or both).
\claimproofend

Let $\mathcal{L}^*$ be a collection of linear orders on $T$ as in Claim \ref{claim:L-star}.
We use this to construct a collection $\mathcal{L}'$ of linear orders on $T\cup B$ whose intersection is the poset $P$. 
We first fix arbitrary orders $R_B$ on $B$ and $R_T$ on $T$ and put the following two linear orders into $\mathcal{L}'$:
\begin{itemize}
    \item First take all vertices in $B$ in the order $R_B$ and then take all vertices in $T$ in the order $R_T$.
    \item First take all vertices in $B$ in the reverse of the order $R_B$ and then take all vertices in $T$ in the reverse of the order $R_T$.
\end{itemize}
Both of these linear orders are linear extensions of $P$. Furthermore, together they ensure that, for any two elements $u,w\in T\cup B$ with $u,w\in B$, or with $u,w\in T$, or with $u\in B$ and $w\in T$, there is a linear order $L'\in \mathcal{L}'$ with $u\prec_{L'} w$.

For all $L^*\in \mathcal{L}^*$, we add the following linear order $L'$ on $T\cup B$ to $\mathcal{L}'$. We define $L'$ by ordering the vertices in $T$ according to $L^*$, and then iteratively inserting the vertices $w \in B$ one by one (say, according to the order in $R_B$) as follows:
\begin{itemize}
    \item if $N(w)\neq\emptyset$, then let $v \in N(w)$ be the smallest element in $N(w)$ according to the linear order $L^*$, and add $w$ directly before $v$ in $L'$;
    \item otherwise, if $N(w)=\emptyset$, then add $w$ at the end of the current linear order $L'$, after all the other vertices in $L'$.
\end{itemize}
By construction, $L'$ is a linear extension of $P$, because every vertex $w\in B$ is inserted so that it is before all of its neighbours.

Let us now check that, for every $w\in B$ and $u\in T\setminus N(w)$, there is some linear order in $\mathcal{L}'$ in which $u$ appears before $w$. Indeed, by  the condition in Claim \ref{claim:L-star}, there is $L^*\in \mathcal{L}^*$ with $u\prec_{L^*} v$ for all $v\in N(w)$. When constructing $L'$ from $L^*$ as above, the vertex $w$ is inserted directly before the smallest vertex $v\in N(w)$ according to the linear order $L^*$, meaning that it is inserted into a position after $u$. Thus, we also have $u\prec_{L'} w$ in the final linear order $L'\in \mathcal{L}'$.

All in all this shows that $\mathcal{L}'$ is a collection of linear extensions of $P$ such that for all $u,w\in T\cup B$ with $w\not\prec_P u$, there is a linear order $L'\in \mathcal{L}'$ with $u\prec_{L'} w$. 
We conclude that
\[
\dim(P)\le |\mathcal{L}'|\le |\mathcal{L}^*|+2= 2|\mathcal{L}|+2\leq 20\Delta\log\Delta+2\le 21\Delta\log\Delta.\qedhere
\]
\end{proof}

\paragraph{AI statement.} AI models were only used for searching the literature and generating the TikZ code for Figure~\ref{fig}. The proofs and writing are entirely due to the authors.

\paragraph{Acknowledgements.} This work was carried out at Mathematisches Forschungsinstitut Oberwolfach
(MFO) during the ``MATRIX-MFO Tandem Workshop: Combinatorial Interchange'' in September 2026. We thank the organisers of the workshop and the MFO for the productive working environment, Yelena Yuditsky for helpful discussions, and Jacob Fox for pointing us to \cite{trotter1996graphs}.

\bibliographystyle{abbrv}
\bibliography{poset}

\begin{thebibliography}{1}

\bibitem{adiga2011boxicity}
A.~Adiga, D.~Bhowmick, and L.~S. Chandran.
\newblock Boxicity and poset dimension.
\newblock {\em SIAM Journal on Discrete Mathematics}, 25(4):1687--1698, 2011.

\bibitem{AlonSpencer}
N.~Alon and J.~H. Spencer.
\newblock {\em The Probabilistic Method}.
\newblock John Wiley \& Sons, 4th edition, 2016.

\bibitem{erdos1991dimension}
P.~Erd\H{o}s, H.~A. Kierstead, and W.~T. Trotter.
\newblock The dimension of random ordered sets.
\newblock {\em Random Structures \& Algorithms}, 2(3):253--275, 1991.

\bibitem{furedi1986dimensions}
Z.~F{\"u}redi and J.~Kahn.
\newblock On the dimensions of ordered sets of bounded degree.
\newblock {\em Order}, 3(1):15--20, 1986.

\bibitem{scott2020better}
A.~Scott and D.~Wood.
\newblock Better bounds for poset dimension and boxicity.
\newblock {\em Transactions of the American Mathematical Society}, 373(3):2157--2172, 2020.

\bibitem{trotter1996graphs}
W.~T. Trotter.
\newblock Graphs and partially ordered sets: recent results and new directions.
\newblock {\em Congressus Numerantium}, pages 253--278, 1996.

\bibitem{TrotterMoore1976}
W.~T. Trotter and J.~I. Moore.
\newblock Characterization problems for graphs, partially ordered sets, lattices, and families of sets.
\newblock {\em Discrete Mathematics}, 16(4):361--381, 1976.

\end{thebibliography}

\appendix

\section{Proof of Lemma~\ref{lem:kimble}}
Lemma~\ref{lem:kimble} was originally proved by Kimble (see, e.g., \cite{trotter1996graphs}), and a proof can be found in Trotter's book on posets~\cite{trotter1996graphs}. For the reader's convenience, we include a simplified version of the proof here.
\kimble*
\begin{proof}[Proof of Lemma \ref{lem:kimble}] Let $d=\dim(P')$ and let $\{L_1', \ldots, L_d'\}$ be linear orders on $X \times \{0,1\}$ such that $P' = \bigcap_{i = 1}^d L'_i$. For each $i \in [d]$, form a linear order $L_i$ on $X$ as follows: 
    \begin{itemize}
        \item Initially, let $L_i^0$ be the linear order on $X$ given by the restriction of $L_i'$ to $X \times \{1\}$.
        \item For $j = 1, \ldots, n$, sequentially, define $L_i^j$ as follows: Let $(z_1, \ldots, z_n)$ be the permutation of $X$ corresponding to $L_i^{j-1}$. If there is no $j' \in \{1, \ldots, j-1\}$ with $z_j \prec_P z_{j'}$, then set $L_i^j := L_i^{j-1}$. Otherwise, let $j_0 \in \{1, \ldots, j - 1\}$ be the smallest index such that $z_j \prec_P z_{j_0}$, and form $L_i^j$ from $L_i^{j-1}$ by moving $z_j$ to between $z_{j_0 - 1}$ and $z_{j_0}$ (where if $j_0 = 1$, then we move $z_j$ to the beginning). More informally, we move $z_j$ as little as possible (and perhaps not at all) to earlier in the sequence so that it comes before any $z_{j'}$ for which $z_j \prec_P z_{j'}$.

        \item Let $L_i= L_i^n$.
    \end{itemize}
    
We claim that $P = \bigcap_{i = 1}^d L_i$. For this, we need to show that, for every $x,y\in X$ with $x \prec_P y$ we have $x\prec_{L_i} y$ for \emph{each} $i\in [d]$ and, for every $x,y\in X$ with $x \not\prec_P y$, we have $y\prec_{L_i} x$ for \emph{some} $i\in [d]$. 

Suppose, for contradiction, that there are some $x,y\in X$ and $i\in [d]$ with $x \prec_P y$ but $y \prec_{L_i} x$. Take such a pair $x,y$ so that the latest of $x$ and $y$ in $L_i^0$ comes as early as possible. If $y \prec_{L_i^0} x$, then, in defining $L_i$, $x$ was processed after $y$ and thus moved somewhere earlier than $y$ as $x \prec_P y$. After $x$ and $y$ are processed, their relative order does not change in subsequent rounds in the construction of $L_i$, and hence $x \prec_{L_i} y$, a contradiction. Thus, we may assume that $x \prec_{L_i^0} y$, and so $y$ was processed after $x$. Now, when processing $y$ (say at step $j$, where $L_i^{j}$ is obtained from $L_i^{j-1}$ by moving $y$), $y$ must be moved to a position before $x$. This can only happen if there is some $z$ earlier than $x$ in $L_i^{j-1}$ (which then implies $z\prec_{L_i}x$ and $z\prec_{L_i^0} y$) with $y\prec_P z$. Then, by transitivity, we have $x\prec_P z$. In particular, $x$ and $z$ are also a violating pair but $z$ comes earlier than $y$ in $L_i^0$. This contradicts the choice of $x$ and $y$. Therefore, there are no $x,y\in X$ and $i\in[d]$ with $x\prec_P y$ and $y \prec_{L_i} x$.

 Next, suppose for contradiction that there exist distinct $x,y\in X$ with $x\not\prec_P y$ such that for all $i\in [d]$ we have $x\prec_{L_i}y$. 
 Let 
    $$
        X' = \{x' \in X \; : \; x \preceq_P x'\}.
    $$
As $(x,0)\not\prec_{P'}(y,1)$,  we can find some $i\in [d]$ with $(y, 1) \prec_{L'_i} (x, 0)$.
    For each $x'\in X$, we have $x \preceq_P x'$, hence $(x,0)\prec_{P'}(x',1)$, so $(x,0)\prec_{L_i'}(x',1)$, and therefore $(y, 1) \prec_{L_i'}(x',1)$. Thus, every $x'\in X'$ comes after $y$ in $L_i^0$, so at the beginning of the construction process for $L_i$, all elements of $X'$ are to the right of $y$. Since $x\prec_{L_i}y$, at the end of the process the element $x\in X'$ is on the left of $y$ in $L_i=L_i^n$. Thus, we can consider the first moment in the process (i.e., the first $L_i^j$) where an element of $X'$ appears to the left of $y$, and call this element $x'\in X'$. This moment must be when the element $x'$ is processed and, to cause it to move to the left of $y$, there must have already been some $z\in X$ somewhere to the left of $y$ such that $x'\prec_P z$. But, by transitivity, we then have $x\prec_P z$ and hence $z\in X'$. This is a contradiction, as we considered the first moment in the process where there is an element of $X'$ to the left of $y$. Therefore, for every distinct $x,y\in X$ with $x\not\prec_P y$ there is some $i\in [d]$ with $y\prec_{L_i}x$.
\end{proof}
\end{document}